\documentclass[12pt,a4paper]{amsart}
\usepackage[utf8]{inputenc}
\usepackage[english]{babel}
\usepackage{amsmath,amssymb,amsfonts}
\usepackage[a4paper,margin=25mm]{geometry}
\usepackage{cite}

\newtheorem{lemma}{Lemma}
\newtheorem{theorem}{Theorem}
\newtheorem{assertion}{Proposition}
\numberwithin{equation}{section}

\title[Genericity of singularities on compact sets]{Genericity of singularities of solutions to quasilinear systems on compact sets}
\author[A.M. Shavlukov]{A.M. Shavlukov}
\address{Institute of Mathematics with Computing Centre,
a subdivision of the Ufa Federal Research Centre
of the Russian Academy of Sciences,
112 Chernyshevsky St., Ufa 450008, Russia}
\email{aza3727@yandex.ru}
\date{}
\subjclass[2020]{58K05; 35L40, 35J46}
\keywords{gas dynamics, nonlinear geometrical optics,
gradient catastrophe, singularity theory of differentiable mappings}

\begin{document}
\begin{abstract}
We study the genericity of singularities of types
$A_2,A_3,D_4^\pm$ in solutions of the equations of gas dynamics and nonlinear
geometrical optics. The sets of solutions satisfying the corresponding
transversality conditions are proved to be open and dense in spaces of
smooth and real-analytic solutions equipped with the Whitney topology on compact sets.
\end{abstract}
\maketitle

\section{Introduction}

The study of singularities of solutions of partial differential equations is of
interest both in theory and in applications to gas dynamics, hydrodynamics,
and optics. Beginning with the work of A.~Kh.~Rakhimov \cite{rakh92,rakh93},
singularity theory of differentiable mappings has been used to study generic
singularities of solutions to quasilinear systems of two equations for two
unknown functions
\cite{dub06,dgk09,ko17,ko20,ko13,bogaevsky,kud95,kud01,gar10,suldis,sul12,chap24,proval,ss22,ss24,mss24,shav,accepted}.

The papers \cite{proval,ss22,ss24,mss24,shav,accepted} describe singularities
of types $A_2,A_3,D_4^\pm$ arising in solutions of the one-dimensional gas
dynamics equations and their elliptic counterpart, the equations of nonlinear
geometrical optics. The singularities are considered up to $RL$-equivalence.
The aim of this paper is to establish the genericity of these singularities
of solutions in the class under consideration. We give the corresponding
transversality conditions and prove that the sets of solutions satisfying
them are open and dense in the Whitney topology. These open dense sets also
contain solutions with no singularities of the listed types on the chosen compact set.
The results concern restrictions of solutions to a sufficiently small compact
neighborhood of the hodograph image of a gradient catastrophe point.

As in \cite{proval,ss22,ss24,mss24,shav,accepted}, our analysis of these
singularities uses the relations among the Taylor coefficients of solutions
that follow from the equation and the vanishing of the Jacobian of the
hodograph transformation. No additional equality constraints are imposed on
the coefficients. This does not rule out solutions with singularities of
types $A_k$, $k>3$, or solutions without singularities of the types listed
above. For example, the identically zero solution of the corresponding
linear equation has no singularities of these types. This paper concerns
the genericity of precisely the listed singularities.
The transversality conditions are expressed in terms of these coefficients
and compared with the conditions in the cited papers.
The topology is defined on restrictions of solutions to a compact set.

Lemmas 1--3 establish transversality conditions for mappings. These conditions
are used in Theorems 1--3 to prove genericity of singularities of solutions.

The author thanks V.~D.~Sedykh for helpful discussions of Lemma~1.

\section{Hyperbolic system: $\rho_*>0$}

\subsection{}
Consider the space $C^\infty(U,\mathbb R^2)$ of smooth mappings
$f:(r,l)\mapsto(x,t)$, where $U$ is an open subset of the plane.
Equip this space with the Whitney $C^\infty$ topology
(see, for example, \cite{sedykh}).

\begin{lemma}\label{lem1}
Let $L$ be the closed stratified subset of the space of 1-jets defined by
\[
L=\{t_rt_l=0\}.
\]
The mappings $f$ for which $j^1f$ is transverse to every stratum of $L$ form
an open dense subset of $C^\infty(U,\mathbb R^2)$.
For these mappings, all critical points of $t$ are nondegenerate, and
$t_r,t_l$ have only regular zeros.
\end{lemma}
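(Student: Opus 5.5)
The plan is to apply the Thom transversality theorem to the $1$-jet extension. The key step is to exhibit a Whitney stratification of $L$ by smooth submanifolds of $J^1(U,\mathbb R^2)$. Write the 1-jet space as $U\times\mathbb R^2\times\mathbb R^4$ with fibre coordinates $(x,t,x_r,x_l,t_r,t_l)$. Then $L=\{t_rt_l=0\}$ is the union of two coordinate hyperplanes in the fibre. I would take the strata
\[
L_1=\{t_r=0,\ t_l\ne0\},\qquad L_2=\{t_l=0,\ t_r\ne0\},\qquad L_0=\{t_r=t_l=0\}.
\]
These are smooth submanifolds of codimension $1,1,2$ respectively. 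The frontier condition holds, and the pair $(L_1,L_0)$ (and likewise $(L_2,L_0)$) satisfies Whitney's condition (b), because locally it is a linear subspace inside a hyperplane. So $L$ is a closed Whitney-stratified set.

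By the Thom transversality theorem (see \cite{sedykh}), for each stratum $L_i$ the set of $f$ with $j^1f\pitchfork L_i$ is residual, hence dense, in $C^\infty(U,\mathbb R^2)$. For openness I would use the standard fact that transversality to a closed Whitney-stratified set is an open condition in the Whitney $C^\infty$ topology. Its closed neighbourhood is $L_0$ for the open strata $L_1,L_2$, and $L_0$ is itself closed; condition (a) gives the needed openness near $L_0$.

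If one prefers to avoid this general fact, there is a direct argument. Transversality to $L_0$ is open since $L_0$ is closed. Near points where $j^1f$ meets $L_0$, transversality to $L_0$ means the map $(r,l)\mapsto(t_r,t_l)$ is a local diffeomorphism onto a neighbourhood of $0$. This forces nearby $j^1g$ to be transverse to $L_1$ and $L_2$ as well. Away from $L_0$, $L_1\cup L_2$ is closed, so the usual argument applies there. The intersection of finitely many open dense sets is open dense.

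Next I would interpret transversality concretely. Since $\dim U=2$ and $\operatorname{codim}L_0=2$, $j^1f\pitchfork L_0$ means that at every common zero of $t_r,t_l$ the map $(t_r,t_l)$ has nonzero Jacobian:
\[
\det\begin{pmatrix}t_{rr}&t_{rl}\\ t_{lr}&t_{ll}\end{pmatrix}\ne0 .
\]
This is exactly nondegeneracy of the critical points of $t$. Transversality to $L_1$ means that at every zero of $t_r$ with $t_l\ne0$ we have $d(t_r)\ne0$. At zeros where also $t_l=0$, the nondegenerate Hessian already gives $d(t_r)\ne0$. Hence $t_r$, and symmetrically $t_l$, has only regular zeros.

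The main obstacle is the openness claim. Thom's theorem directly gives only residuality, and openness for the non-closed strata $L_1,L_2$ requires handling their boundary $L_0$. The direct argument above, using that the Hessian is nondegenerate at points of $L_0$, is what I would try first.
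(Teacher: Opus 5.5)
Your proposal is correct and follows essentially the paper's route: density from Thom's jet transversality theorem applied to each stratum, openness from the closedness of $L$ together with Whitney condition (a) (the paper invokes Trotman's theorem for exactly this "standard fact"), and the concrete transversality conditions read off to get nondegenerate critical points of $t$ and regular zeros of $t_r,t_l$. The differences are cosmetic: the paper splits your strata $L_1,L_2$ into the sign components $L_1^\pm,L_2^\pm$. Your optional direct openness argument replaces the appeal to the general theorem, and it does go through. Near critical points of $t$, the nonsingular Jacobian of $(t_r,t_l)$ forces transversality to the codimension-one strata. Away from them, the condition is positivity of a continuous function of the $2$-jet, which survives Whitney-small perturbations.
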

\begin{proof}
The stratification consists of five strata:
\[
\begin{array}{ll}
L_0=\{t_r=t_l=0\},&\operatorname{codim}L_0=2,\\
L_1^\pm=\{t_r=0,\ \pm t_l>0\},&\operatorname{codim}L_1^\pm=1,\\
L_2^\pm=\{t_l=0,\ \pm t_r>0\},&\operatorname{codim}L_2^\pm=1.
\end{array}
\]
Whitney's first condition, condition~(a), is satisfied: the tangent space
to $L_0$ is contained in the limiting tangent spaces of the adjacent strata.
At each point under consideration, the transversality conditions are
\begin{equation}\label{trans1}
\begin{aligned}
t_r=t_l=0&\ \Longrightarrow\ t_{rr}t_{ll}-t_{rl}^2\ne0,\\
t_r=0,\ t_l\ne0&\ \Longrightarrow\ (t_{rr},t_{rl})\ne(0,0),\\
t_l=0,\ t_r\ne0&\ \Longrightarrow\ (t_{rl},t_{ll})\ne(0,0).
\end{aligned}
\end{equation}
All derivatives here are evaluated at the point under consideration.
Density follows from Thom's jet transversality theorem for a finite family
of submanifolds \cite[Ch.~3, \S\,2, Theorem~2.9]{hirsch}.
Openness follows from the closedness of $L$, Whitney condition~(a), and
Trotman's theorem \cite[Theorem and Remark~(i)]{trotman}.
The mapping $f\mapsto j^1f$ is continuous in the corresponding Whitney topologies.
\end{proof}
The analogous statement holds with $t$ replaced by $x$ in the definition
of $L$ and in the transversality conditions. The coordinates $(r,l)$ and
$(x,t)$ in the lemma are fixed; its conclusion need not be preserved under
a change of the coordinates $(x,t)$.
When the lemma is applied to solutions, the relation between these conditions
and singularity types up to $RL$-equivalence is established using the
formulas for the derivatives of the hodograph transformation.

\subsection{}
Consider the system of equations of one-dimensional isentropic gas dynamics
(see, for example, \cite{ry})
\begin{equation}\label{gas}
\begin{cases}
u_t+uu_x+\alpha(\rho)\rho_x=0,\\
\rho_t+(\rho u)_x=0.
\end{cases}
\end{equation}
Here $x$ is the spatial coordinate, $t$ is time, $u(x,t)$ is the flow velocity,
$\rho(x,t)>0$ is the density, and $p(\rho)$ is the pressure;
$\alpha(\rho)=p'(\rho)/\rho$ is a positive analytic function.

Let $(x_*,t_*)$ be the gradient catastrophe point under consideration, and
let $(u_*,\rho_*)$, $\rho_*>0$, be the finite limits of the solution
as $(x,t)$ approaches $(x_*,t_*)$ within the chosen sector.
The Riemann invariants
\[
r=u+\int_{\rho_*}^{\rho}\frac{c(s)}s\,ds,\qquad
l=u-\int_{\rho_*}^{\rho}\frac{c(s)}s\,ds,
\qquad c(s)=\sqrt{s\alpha(s)}
\]
transform \cite[Ch.~2, \S\,2]{ry} system~\eqref{gas} into the diagonal system
\begin{equation}\label{riemann}
\begin{cases}
r_t+\left(\dfrac{r+l}{2}+c\right)r_x=0,\\
l_t+\left(\dfrac{r+l}{2}-c\right)l_x=0,
\end{cases}
\end{equation}
where $c(\rho)=\sqrt{\rho\alpha(\rho)}$ and $\rho$ is determined by $r-l$.
Using the formulas for the derivatives of the hodograph transformation
(for example, \cite[Ch.~1, \S\,4]{ry})
\[
\begin{gathered}
r_x=Jt_l,\quad r_t=-Jx_l,\quad l_x=-Jt_r,\quad l_t=Jx_r,\\
J=r_xl_t-r_tl_x,\qquad j=x_rt_l-x_lt_r,\qquad J=j^{-1}
\end{gathered}
\]
we transform the nonlinear system~\eqref{riemann} into the linear system
\begin{equation}\label{hodhyp}
\begin{cases}
x_l=\left(\dfrac{r+l}{2}+c\right)t_l,\\
x_r=\left(\dfrac{r+l}{2}-c\right)t_r.
\end{cases}
\end{equation}
Moreover,
\[
j=-2ct_rt_l.
\]
Thus, at $(r_*,l_*)$, the condition $j=0$ is equivalent to $j^1f$ belonging
to the set $L$ of Lemma~\ref{lem1}.

Using Shvartsburg's transformation \cite{shvartsburg}, introduce
the potential $B=B(u,\rho)$:
\begin{equation}\label{schwarz}
x=uB_u-B-\rho B_\rho,\qquad t=B_u.
\end{equation}
As in \cite{suldis,ss22,ss24,shav}, consider the function
\[
F(u,\rho;x,t)=\rho\bigl(ut-x-B(u,\rho)\bigr).
\]
For $\rho>0$, its critical points with respect to $u,\rho$ are given by
\eqref{schwarz}. In the variables $r,l$, we have $t=B_r+B_l$.
Substituting \eqref{schwarz} into \eqref{hodhyp} gives the equation
\begin{equation}\label{bhyp}
B_{rl}=\kappa(B_r-B_l),\qquad
\kappa=\frac{3+\rho\alpha'/\alpha}{8c}.
\end{equation}
We first assume that
\[
3\alpha(\rho_*)+\rho_*\alpha'(\rho_*)\ne0.
\]
Fix a sufficiently small open neighborhood of $(r_*,l_*)$ in which
$\kappa\ne0$. The choice of this neighborhood is independent of the
original solution $B$. Consider smooth solutions of \eqref{bhyp} in this
neighborhood and choose a compact neighborhood $K$ of $(r_*,l_*)$ within it.
On restrictions of solutions to $K$, the Whitney topology coincides with
the topology of uniform convergence of all derivatives on $K$.

\begin{theorem}\label{th1}
The set of smooth solutions of \eqref{bhyp} satisfying the following condition
at every point of $K$ is open and dense in the stated topology:
\begin{equation}\label{good1}
t_r=t_l=0\quad\Longrightarrow\quad t_{rr}t_{ll}\ne0.
\end{equation}
In particular, the transversality conditions~\eqref{trans1} hold for
$t=B_r+B_l$.
\end{theorem}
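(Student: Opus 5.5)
The plan is to reduce \eqref{trans1} for $t=B_r+B_l$ to \eqref{good1} using the equation, and then to prove openness and density of \eqref{good1} directly on the compact set $K$.

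\emph{Step 1: an equation for $t$.} From \eqref{hodhyp}, the compatibility condition $x_{rl}=x_{lr}$ gives a linear equation of the same Euler--Poisson--Darboux type for $t$. To see this, use $\rho_r=\rho/(2c)$ and $\rho_l=-\rho/(2c)$, which follow from the definition of $r,l$. Then
\[
2c\,t_{rl}=\Bigl(\tfrac12+\tfrac{\rho c'}{2c}\Bigr)(t_r-t_l),
\qquad\text{so}\qquad
t_{rl}=\tilde\kappa\,(t_r-t_l),\quad
\tilde\kappa=\frac{c+\rho c'}{4c^2}.
\]
One could equally differentiate \eqref{bhyp}. Since $2cc'=\alpha+\rho\alpha'$, we get $c+\rho c'=\rho(3\alpha+\rho\alpha')/(2c)$. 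Hence $\tilde\kappa$ is a nonzero multiple of $\kappa$, and $\tilde\kappa\ne0$ in the chosen neighborhood.

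\emph{Step 2: reduction to \eqref{good1}.} If $t_r=t_l=0$, then $t_{rl}=0$. The first condition of \eqref{trans1} then reads $t_{rr}t_{ll}\ne0$, which is exactly \eqref{good1}. If $t_r=0$ and $t_l\ne0$, then $t_{rl}=-\tilde\kappa t_l\ne0$, so the second condition of \eqref{trans1} holds automatically. Symmetrically, the third condition holds automatically. Thus the final assertion of the theorem follows from \eqref{good1}, and it remains to prove that \eqref{good1} is open and dense.

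\emph{Step 3: openness.} The bad set is $\{p\in K:\ t_r=t_l=0,\ t_{rr}t_{ll}=0\}$, where all derivatives are taken at $p$. It is the zero set on $K$ of continuous functions of the $3$-jet of $B$. Suppose $B_n\to B$ uniformly with derivatives on $K$, and each $B_n$ has a bad point $p_n\in K$. By compactness some subsequence $p_n\to p\in K$, and $p$ is a bad point of $B$. Hence the good set is open.

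\emph{Step 4: density.} This is the main obstacle, because perturbations must remain solutions of \eqref{bhyp}, so Lemma~\ref{lem1} cannot be applied directly. I would use the parametric transversality theorem (Sard) with a finite-dimensional family
\[
B_\varepsilon=B+\sum_{i=1}^N\varepsilon_i\varphi_i ,
\]
where the $\varphi_i$ are solutions of \eqref{bhyp}. They are obtained from the Goursat problem for the linear hyperbolic equation \eqref{bhyp}, with polynomial data on the characteristics $r=r_0$ and $l=l_0$ through a point $(r_0,l_0)$.

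The key claim is that at each point $p$ the map
\[
\varepsilon\mapsto\bigl(t_r,t_l,t_{rr},t_{ll}\bigr)(p)
\]
is surjective. The reason is that for the Goursat problem with base point $p$, the pure derivatives of $B$ in $r$ along $l=l_0$ and in $l$ along $r=r_0$ can be prescribed freely. These determine $t_r,t_l,t_{rr},t_{ll}$ at $p$, since mixed derivatives are expressed through them by \eqref{bhyp}. Surjectivity is an open condition in $p$, so it holds near each point, and a finite cover of $K$ gives one finite family that works on all of $K$.

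Next consider $\Phi(p,\varepsilon)=(t_r,t_l,t_{rr}t_{ll})$. The set $\{t_r=t_l=0,\ t_{rr}t_{ll}=0\}$ is a finite union of submanifolds of codimension $3$ in the relevant jet coordinates. By the surjectivity above, $\Phi$ is transverse to each of them. Sard's theorem then gives, for almost every small $\varepsilon$, that $j\,t_\varepsilon$ on the $2$-dimensional $K$ misses this codimension-$3$ set. So \eqref{good1} holds for $B_\varepsilon$, and $B_\varepsilon\to B$ as $\varepsilon\to0$ in the topology on $K$.

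The point needing most care is the surjectivity claim: one must check that Goursat data give independent control of $t_r,t_l,t_{rr},t_{ll}$ at $p$ uniformly in $p$, using that $\kappa$ is smooth and the Riemann function is nondegenerate.
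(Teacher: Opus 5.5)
Your proposal is correct and follows essentially the paper's route. The shared steps are: the reduction of \eqref{trans1} to \eqref{good1} via $t_{rl}=\kappa(t_r-t_l)$ (your $\tilde\kappa$ in fact equals $\kappa$); a finite family of Goursat solutions whose Jacobian in $(t_r,t_l,t_{rr},t_{ll})$ is triangular with unit diagonal at the base point; Sard's theorem applied to the two codimension-three systems $t_r=t_l=t_{rr}=0$ and $t_r=t_l=t_{ll}=0$; and compactness of $K$ for openness. In the Sard step, what you actually need is the map to $(t_r,t_l,t_{rr},t_{ll})$, not the map $\Phi$ with the product $t_{rr}t_{ll}$ that you wrote.

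The only difference is that you cover $K$ by finitely many Goursat base points. The paper instead uses the single base point $(r_*,l_*)$ and shrinks the initial neighborhood, independently of $B$, so that one $4\times4$ matrix stays nonsingular there. Your variant avoids that shrinking at the cost of more parameters.
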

\begin{proof}
We first show that \eqref{good1} is equivalent to transversality to the
stratum $L_0$. Since $\kappa(r,l)$ depends only on $r-l$, we have
$\kappa_r+\kappa_l=0$. Differentiating \eqref{bhyp}, we obtain
\[
t_{rl}=(\partial_r+\partial_l)B_{rl}
=\kappa(B_{rr}-B_{ll})=\kappa(t_r-t_l).
\]
At every point of $K$ where $t_r=t_l=0$, we have $t_{rl}=0$.
Hence the matrix of derivatives of $(t_r,t_l)$ with respect to $(r,l)$ is
\[
\begin{pmatrix}t_{rr}&0\\0&t_{ll}\end{pmatrix}.
\]
Its nonsingularity is equivalent to $t_{rr}t_{ll}\ne0$ at that point.
If $t_r=0$, $t_l\ne0$, then $t_{rl}=-\kappa t_l\ne0$;
if $t_l=0$, $t_r\ne0$, then $t_{rl}=\kappa t_r\ne0$.
Thus the transversality conditions for the strata of codimension one hold.
Consequently, \eqref{good1} is equivalent to transversality to $L_0$.
It remains to prove that the solutions satisfying this condition form
an open dense set.

Set $\Delta r=r-r_*$, $\Delta l=l-l_*$.
At $(r_*,l_*)$, where $t_r=t_l=0$, the cubic part of the expansion of $F$
with $x=x_*,t=t_*$ fixed is
\[
A_+(\Delta r)^3+A_-(\Delta l)^3,\qquad A_+=-\frac{\rho_*}{6}t_{rr}(r_*,l_*),\qquad
A_-=-\frac{\rho_*}{6}t_{ll}(r_*,l_*).
\]
Thus \eqref{good1} is equivalent to $A_+A_-\ne0$, the nondegeneracy condition
on the cubic part used in \cite{ss22} for a section of the hyperbolic
umbilic catastrophe $D_4^+$.

When $t_r=0$, $t_l\ne0$, the coefficient conditions in \cite{ss22} take
the form $t_{rr}\ne0$ for a fold and $t_{rr}=0$, $t_{rrr}\ne0$ for a cusp.
When $t_r=0$, $t_l\ne0$, $t_{rr}=0$, we assume $t_{rrr}\ne0$, as in
\cite{ss22}. This condition on the Taylor coefficients is part of the
problem formulation. The case $t_l=0$, $t_r\ne0$ is treated similarly.

To prove density, fix an arbitrary smooth solution $B$ of \eqref{bhyp}.
The existence of smooth solutions with admissible Taylor expansions was
established in \cite[Lemma~1]{ss22}.
Construct four solutions $B_i$ with the Goursat data
\[
\begin{array}{c|rrrr}
i&1&2&3&4\\ \hline
B_i(r,l_*)&(\Delta r)^2/2&0&(\Delta r)^3/6&0\\
B_i(r_*,l)&0&(\Delta l)^2/2&0&(\Delta l)^3/6
\end{array}.
\]
Smooth solutions of this Goursat problem in a sufficiently small rectangle
are constructed by successive approximations
\cite[Ch.~V, \S\,6, items~1--3]{courant}.

Set $B_\lambda=B+\sum_{i=1}^4\lambda_iB_i$ and
$t_\lambda=(B_\lambda)_r+(B_\lambda)_l$.
The matrix
\[
\frac{\partial\bigl((t_\lambda)_r,(t_\lambda)_l,
(t_\lambda)_{rr},(t_\lambda)_{ll}\bigr)}
{\partial(\lambda_1,\lambda_2,\lambda_3,\lambda_4)}
\]
is triangular with ones on the diagonal at $(r_*,l_*)$.
This follows from the Goursat data and equation~\eqref{bhyp}.
The matrix is independent of the original solution $B$ and the family
parameters. By continuity, it is nonsingular in a sufficiently small
neighborhood of this point. In choosing the initial open neighborhood,
we require all $B_i$ to be defined there and this matrix to be nonsingular.
Such a choice is possible and is independent of the original solution $B$.

Each of the following systems consists of three independent equations
in the six variables $(r,l,\lambda)$:
\[
(t_\lambda)_r=(t_\lambda)_l=(t_\lambda)_{rr}=0,\qquad
(t_\lambda)_r=(t_\lambda)_l=(t_\lambda)_{ll}=0.
\]
Here $(r,l)$ ranges over an open neighborhood of $K$.
For each degeneracy, consider the set of points and parameters at which
it occurs. By the regular value theorem \cite{hirsch}, this set is a
three-dimensional submanifold or is empty.
By Sard's theorem, the image of its projection onto the four-dimensional
parameter space has measure zero \cite[Ch.~3, \S\,1, Theorem~1.3]{hirsch}.
Outside the union of these two images, neither degeneracy occurs at any
point of $K$. Such parameters can be chosen arbitrarily close to zero.
As $\lambda\to0$, the solutions $B_\lambda$ converge to $B$ together with
all derivatives on $K$. Since the original solution $B$ is arbitrary, every
neighborhood of it contains a solution satisfying \eqref{good1}.
This proves density.

We now prove openness. Condition~\eqref{good1} excludes simultaneous
vanishing of $t_r,t_l$ with either $t_{rr}$ or $t_{ll}$. It is therefore
equivalent to positivity on $K$ of the two sums
\[
t_r^2+t_l^2+t_{rr}^2,\qquad t_r^2+t_l^2+t_{ll}^2.
\]
Denote either sum by $G$; its minimum $m=\min_K G$ is positive.
For a nearby solution $\widetilde B$, set
$\widetilde t=\widetilde B_r+\widetilde B_l$ and obtain $\widetilde G$
by replacing $t$ with $\widetilde t$ in the expression for $G$.
Suppose that the absolute values of the three derivatives occurring in
$G$ do not exceed $C$ on $K$. Assume that each corresponding derivative
of $\widetilde t$ differs from the same derivative of $t$ by less than
$\delta>0$. Then
\[
\sup_K|\widetilde G-G|\le3\delta(2C+\delta).
\]
Choosing $\delta$ sufficiently small gives
$\sup_K|\widetilde G-G|<m/2$, and hence $\widetilde G>m/2>0$ on $K$.
These bounds hold if the derivatives of $\widetilde B$ up to order three
are uniformly close enough to those of $B$ on $K$. Applying the argument to both sums shows that
\eqref{good1} also holds for all sufficiently close solutions.
\end{proof}

\textbf{Remark 1.}
The compactness of $K$ is essential to the proof.
The stated conditions are equivalent to positivity of certain sums of
squares of derivatives. On a compact set, each such sum attains a positive
minimum, so the required inequalities hold for all sufficiently close solutions.

In an open neighborhood, these sums may tend to zero as the boundary is
approached. In the Whitney topology, closeness of solutions is specified
by inequalities bounding the absolute differences of their derivatives.
The upper bounds in these inequalities may depend on the point and tend
to zero at the boundary of the domain. Making the constant parameters of
the constructed family of solutions smaller does not necessarily ensure
these inequalities throughout the open neighborhood.
On an open domain, the corresponding set of solutions is also open in
the Whitney topology. However, the density proof given above does not
carry over directly. This is a limitation of the proof, not an assertion
that density fails. The same difficulty arises in the elliptic case and in
the case $\rho_*=0$ considered below.

\textbf{Remark 2.}
The Chaplygin gas violates the genuine nonlinearity condition.
The Bechert--Stanyukovich gas satisfies this condition but violates a
stronger condition \cite[\S\,1.3]{shav}; see also \cite{chap24}.
Both cases are used to approximate isentropic gas flow.
Theorem~\ref{th1} applies to the Bechert--Stanyukovich gas.

\textbf{Remark 3.}
For the Chaplygin gas, $p(\rho)=p_0-M^2/\rho$ and
$\alpha(\rho)=M^2\rho^{-3}$, where $M>0$ is a constant.
In this case $\kappa=0$, so \eqref{bhyp} reduces to $B_{rl}=0$.

\begin{assertion}\label{chap}
The set of smooth solutions of $B_{rl}=0$ satisfying the following conditions
at every point of $K$ is open and dense in the topology on $K$:
\begin{equation}\label{goodchap}
B_{rr}=0\ \Longrightarrow\ B_{rrr}\ne0,\qquad
B_{ll}=0\ \Longrightarrow\ B_{lll}\ne0.
\end{equation}
For these solutions, the transversality conditions~\eqref{trans1} hold
for $t=B_r+B_l$.
\end{assertion}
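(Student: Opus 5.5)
For $B_{rl}=0$ we will use the decomposition $B(r,l)=P(r)+Q(l)$, where $P,Q$ are smooth functions of one variable on the projections of $K$ (shrinking the initial neighborhood to a rectangle if needed). Then $t=B_r+B_l=P'(r)+Q'(l)$, so $t_r=P''(r)=B_{rr}$, $t_l=Q''(l)=B_{ll}$, $t_{rl}=0$, $t_{rr}=P'''=B_{rrr}$, $t_{ll}=Q'''=B_{lll}$.

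\emph{Transversality.} We check \eqref{trans1} under \eqref{goodchap}. At a point with $t_r=t_l=0$ the Hessian of $t$ is $\operatorname{diag}(B_{rrr},B_{lll})$, which is nonsingular because both entries are nonzero by \eqref{goodchap}. If $t_r=0$ and $t_l\ne0$, then $t_{rr}=B_{rrr}\ne0$, so $(t_{rr},t_{rl})\ne(0,0)$. The case $t_l=0$, $t_r\ne0$ is symmetric. Unlike in Theorem~\ref{th1}, here $t_{rl}\equiv0$, so the codimension-one strata are no longer automatic; this is exactly why \eqref{goodchap} contains the one-variable conditions in place of \eqref{good1}.

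\emph{Density.} Fix $B=P+Q$ and consider $B_\lambda=B+\lambda_1(\Delta r)^2/2+\lambda_2(\Delta l)^2/2$, which is again a solution of $B_{rl}=0$. The condition $(B_\lambda)_{rr}=(B_\lambda)_{rrr}=0$ reads $P''(r)+\lambda_1=0$, $P'''(r)=0$. The set of $r$ in a neighborhood of the projection of $K$ where $P'''(r)=0$ is contained in the critical set of $P''$. Hence the bad values $\lambda_1\in\{-P''(r):P'''(r)=0\}$ are critical values of the smooth function $-P''$ of one variable, and by Sard's theorem they form a set of measure zero. The same holds for $\lambda_2$ with $Q$. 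Choosing $(\lambda_1,\lambda_2)$ arbitrarily small outside these null sets gives $B_\lambda$ satisfying \eqref{goodchap} on $K$, and $B_\lambda\to B$ with all derivatives on $K$ as $\lambda\to0$. Equivalently, one could use the Goursat-data solutions of the proof of Theorem~\ref{th1} and the regular value theorem in $(r,\lambda_1)$ and $(l,\lambda_2)$.

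\emph{Openness.} This repeats the argument of Theorem~\ref{th1}. Condition \eqref{goodchap} is equivalent to positivity on $K$ of the two functions $B_{rr}^2+B_{rrr}^2$ and $B_{ll}^2+B_{lll}^2$. By compactness of $K$ each has a positive minimum $m$. The same estimate $\sup_K|\widetilde G-G|\le 2\delta(2C+\delta)<m/2$ then shows that the condition persists for every solution whose derivatives up to order three are uniformly $\delta$-close on $K$.

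The only delicate point is the density step: a naive perturbation by $(\Delta r)^3$ does not help with the coincidence $B_{rr}=B_{rrr}=0$. A shift of the second derivative combined with one-dimensional Sard is what is needed.
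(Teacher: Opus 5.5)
Your proposal is correct and is essentially the paper's proof: the same splitting $B=f(r)+g(l)$ on a rectangle containing $K$, the same density family $B+\varepsilon r^2/2+\eta l^2/2$ with one-variable Sard applied to $f''$ and $g''$, the same openness argument via positive minima of $B_{rr}^2+B_{rrr}^2$ and $B_{ll}^2+B_{lll}^2$ on $K$, and the same identities $t_r=B_{rr}$, $t_l=B_{ll}$, $t_{rl}=0$, $t_{rr}=B_{rrr}$, $t_{ll}=B_{lll}$ for transversality. Your closing aside is overstated, though it plays no role in the argument: a small generic cubic term $\mu(\Delta r)^3/6$ also removes the coincidence $B_{rr}=B_{rrr}=0$, by a slightly longer Sard argument.
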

\begin{proof}
In a rectangular neighborhood containing $K$, the solutions have the form
$B=f(r)+g(l)$. Hence
\[
B_{rr}=f'',\qquad B_{ll}=g'',\qquad B_{rrr}=f''',\qquad B_{lll}=g'''.
\]
To prove density, consider the exact solutions
\[
B_{\varepsilon,\eta}=B+\frac{\varepsilon r^2}{2}+\frac{\eta l^2}{2}.
\]
For these solutions, $(B_{\varepsilon,\eta})_{rr}=f''(r)+\varepsilon$,
$(B_{\varepsilon,\eta})_{ll}=g''(l)+\eta$, while
$(B_{\varepsilon,\eta})_{rrr}=f'''(r)$ and
$(B_{\varepsilon,\eta})_{lll}=g'''(l)$.
By Sard's theorem, arbitrarily small $\varepsilon,\eta$ can be chosen so
that $-\varepsilon,-\eta$ are regular values of $f'',g''$ on open intervals
containing the projections of $K$. Then \eqref{goodchap} holds at every
point of $K$, and $B_{\varepsilon,\eta}\to B$ in the chosen topology.

We now prove openness. Conditions~\eqref{goodchap} are equivalent to
positivity of $B_{rr}^2+B_{rrr}^2$ and $B_{ll}^2+B_{lll}^2$.
Both sums have positive minima on the compact set $K$.
As in the proof of Theorem~\ref{th1}, this ensures that the conditions
hold for all sufficiently close solutions.

Since $B_{rl}=0$ and $t=B_r+B_l$, we have
\[
t_r=B_{rr},\quad t_l=B_{ll},\quad t_{rr}=B_{rrr},\quad
t_{ll}=B_{lll},\quad t_{rl}=0.
\]
At a point where $t_r=t_l=0$, we have $B_{rr}=B_{ll}=0$.
Thus \eqref{goodchap} implies both $B_{rrr}\ne0$ and $B_{lll}\ne0$.
Consequently,
\[
t_{rr}t_{ll}-t_{rl}^{\,2}=B_{rrr}B_{lll}\ne0,
\]
which, by Lemma~\ref{lem1}, ensures transversality to the stratum $L_0$
of codimension two.
If $t_r=0$, $t_l\ne0$, then $t_{rr}\ne0$;
if $t_l=0$, $t_r\ne0$, then $t_{ll}\ne0$.
Thus the transversality conditions for the strata of codimension one hold.
\end{proof}

\section{Elliptic system: $\rho_*>0$}

\subsection{}
Let $U$ be an open neighborhood of $(u_*,\rho_*)$ in the $(u,\rho)$ plane.
Equip the space of real-analytic mappings $f:U\to\mathbb R^2$,
$f:(u,\rho)\mapsto(x,t)$, with the Whitney topology induced from the
space $C^\infty(U,\mathbb R^2)$ of smooth mappings.

\begin{lemma}\label{lem2}
Let $L$ be the closed stratified subset of the space of 1-jets defined by
\[
L=\{t_u=t_\rho=0\}.
\]
The mappings $f$ for which $j^1f$ is transverse to $L$ form an open dense
subset of the space of analytic mappings. For these mappings, all critical
points of $t$ are nondegenerate.
\end{lemma}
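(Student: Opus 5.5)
The plan is to follow the proof of Lemma~\ref{lem1}, with two changes: here $L$ has only one stratum, and the ambient space consists of analytic rather than smooth mappings. The set $L=\{t_u=t_\rho=0\}$ is a closed submanifold of codimension two in the space of 1-jets $J^1(U,\mathbb R^2)$; since it is a single stratum, no Whitney condition has to be checked. First I will write out the transversality condition explicitly. At a point where $j^1f\in L$, transversality means that the map $(u,\rho)\mapsto(t_u,t_\rho)$ has surjective differential. Equivalently,
\[
t_u=t_\rho=0\ \Longrightarrow\ t_{uu}t_{\rho\rho}-t_{u\rho}^2\ne0 ,
\]
which is exactly the statement that every critical point of $t$ is nondegenerate. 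This gives the last assertion of the lemma at once.

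For openness I will argue as in Lemma~\ref{lem1}. In $C^\infty(U,\mathbb R^2)$ the set of smooth mappings whose $1$-jet extension is transverse to a closed submanifold is open in the Whitney $C^\infty$ topology. This can be seen either from Trotman's theorem or from the elementary fact that transversality to a closed submanifold is an open condition on $j^2f$. The map $f\mapsto j^1f$ is continuous. The space of analytic mappings carries the induced topology, so the intersection of this open set with the analytic mappings is open in that space.

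Density is the step I expect to be the main obstacle. Thom's theorem produces smooth perturbations, and these need not be analytic. I would handle this with a finite-dimensional family of analytic perturbations, in the spirit of the density proof of Theorem~\ref{th1}. Given an analytic $f=(x,t)$, set
\[
f_\lambda=\bigl(x,\ t+\lambda_1 u+\lambda_2\rho\bigr),\qquad \lambda\in\mathbb R^2 .
\]
The map $(u,\rho,\lambda)\mapsto(t_u+\lambda_1,\,t_\rho+\lambda_2)$ is a submersion. By the parametric transversality theorem, for $\lambda$ outside a set of measure zero (Sard's theorem) the jet $j^1f_\lambda$ is transverse to $L$ on all of $U$. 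Each $f_\lambda$ is analytic.

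The remaining difficulty is that $f_\lambda\to f$ as $\lambda\to0$ only uniformly on compact sets. In the Whitney topology on a noncompact $U$, convergence may require the perturbation to decay at the boundary, and constant $\lambda$ need not achieve this; this is the same issue described in Remark~1. To deal with it I would try two routes. The first is to replace $\lambda_1u+\lambda_2\rho$ by an analytic perturbation $\lambda_1\varphi_1+\lambda_2\varphi_2$ that is small in the Whitney sense. Such functions are obtained by Whitney analytic approximation of smooth functions with $d\varphi_1,d\varphi_2$ linearly independent and fine control near $\partial U$; one then uses a countable family of parameters on an exhausting sequence of compacts together with a Baire argument. The second, if that proves too heavy, is to take $U$ relatively compact, or to work on a compact set $K$ as in Theorem~\ref{th1}. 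There the Whitney topology is the topology of uniform convergence of all derivatives, the linear family above suffices directly, and this matches how the lemma is later applied near $(u_*,\rho_*)$.
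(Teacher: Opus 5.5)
Your transversality computation and your openness argument are correct and agree with the paper. The gap is density in the topology the lemma is actually about: the Whitney topology on the open set $U$, induced from $C^\infty(U,\mathbb R^2)$.

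As you observe, the family $t+\lambda_1u+\lambda_2\rho$ does not approach $t$ in that topology. Its first derivatives differ from those of $t$ by the constant vector $\lambda$. So for $\lambda\ne0$ it misses every Whitney neighborhood that controls first derivatives with a bound $\varepsilon(p)$ tending to zero at $\partial U$.

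Neither fallback repairs this. Route~(2) proves a different statement. For a relatively compact open $U$ the Whitney topology is still strictly finer than uniform convergence of derivatives on $U$, since the bounds may still decay at $\partial U$. Passing to a compact $K$ changes the topology of the lemma; that is the setting of Theorems~\ref{th1}--\ref{th3}, not of Lemma~\ref{lem2}. Route~(1) is only a sketch, and as sketched it is hard to complete. No single pair $\varphi_1,\varphi_2$ can decay at $\partial U$ and have $d\varphi_1\wedge d\varphi_2\ne0$ everywhere: at an interior maximum of $\varphi_1^2+\varphi_2^2$ the Jacobian degenerates. So you are forced to use infinitely many perturbations. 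Their sum converges only in the $C^\infty$ sense and need not be analytic, and you do not specify the space in which the Baire argument is run.

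The missing idea is that no parameter family is needed: combine openness with analytic approximation in the Whitney topology, as the paper does. Given an analytic $f$ and a Whitney neighborhood $\mathcal N$ of $f$, Thom's theorem gives a smooth $g\in\mathcal N$ with $j^1g$ transverse to $L$. The set $\mathcal T$ of smooth maps with this property is Whitney-open, so $\mathcal N\cap\mathcal T$ is an open neighborhood of $g$ in $C^\infty(U,\mathbb R^2)$. By Grauert's approximation theorem (Proposition~8 of \cite{grauert}), analytic maps are dense in $C^\infty(U,\mathbb R^2)$ with the Whitney topology. Hence $\mathcal N\cap\mathcal T$ contains an analytic map $h$, which is transverse to $L$ and lies in $\mathcal N$; this is density.

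You already had all three ingredients: smooth density, openness, and Whitney analytic approximation. What was missing was applying the analytic approximation to the transverse smooth map $g$ rather than to the perturbing functions $\varphi_i$.
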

\begin{proof}
The set $L$ has a single stratum of codimension two.
Whitney condition~(a) is satisfied. At each point under consideration,
the transversality condition is
\begin{equation}\label{trans2}
t_u=t_\rho=0\quad\Longrightarrow\quad
\det\begin{pmatrix}t_{uu}&t_{u\rho}\\t_{\rho u}&t_{\rho\rho}\end{pmatrix}\ne0.
\end{equation}
In the smooth class, openness and density follow from Thom's theorem.
Choose a smooth transverse mapping sufficiently close to the given analytic
mapping. By openness of transversality, a sufficiently close analytic approximation
is also transverse and remains sufficiently close to the original mapping.
The existence of such an analytic approximation in the Whitney topology
follows from Proposition~8 of \cite{grauert}.
\end{proof}

\subsection{}
Consider the system of equations of nonlinear geometrical optics
\[
\begin{cases}
u_t+uu_x-\alpha(\rho)\rho_x=0,\\
\rho_t+(\rho u)_x=0.
\end{cases}
\]
Here $\alpha(\rho)>0$ is analytic in a neighborhood of $\rho_*>0$.
The point $(x_*,t_*)$, the limiting values $(u_*,\rho_*)$, and the sector
are chosen as in Section~2.2.
The formulas for the derivatives of the hodograph transformation are
\[
\begin{gathered}
u_x=Jt_\rho,\quad u_t=-Jx_\rho,\quad
\rho_x=-Jt_u,\quad \rho_t=Jx_u,\\
J=u_x\rho_t-u_t\rho_x,\qquad j=x_ut_\rho-x_\rho t_u,\qquad J=j^{-1}.
\end{gathered}
\]
They give the linear system
\[
\begin{cases}
x_\rho=ut_\rho+\alpha t_u,\\
x_u=ut_u-\rho t_\rho,
\end{cases}
\]
and
\[
j=-\rho t_\rho^2-\alpha t_u^2.
\]
Thus $j(u_*,\rho_*)=0$ is equivalent to simultaneous vanishing of
$t_u(u_*,\rho_*)$ and $t_\rho(u_*,\rho_*)$.
Transformation~\eqref{schwarz} leads to the equation
\begin{equation}\label{bell}
\rho B_{\rho\rho}+2B_\rho=-\alpha(\rho)B_{uu}.
\end{equation}
The existence of analytic solutions of this equation was
established in \cite{ss24}.
First fix a sufficiently small open neighborhood of $(u_*,\rho_*)$.
Its choice is independent of the original solution $B$.
Consider analytic solutions there and choose a compact neighborhood $K$
of $(u_*,\rho_*)$ lying in the half-plane $\rho>0$.
The topology on restrictions of solutions to $K$ is defined by uniform
convergence of all derivatives.

\begin{theorem}\label{th2}
The set of analytic solutions of \eqref{bell} satisfying the following condition
at every point of $K$ is open and dense:
\begin{equation}\label{good2}
t_u=t_\rho=0
\quad\Longrightarrow\quad
(t_{uu},t_{u\rho})\ne(0,0).
\end{equation}
Wherever $t_u=t_\rho=0$ on $K$, the transversality condition~\eqref{trans2} holds for $t=B_u$.
\end{theorem}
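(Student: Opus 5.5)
The plan is to follow the scheme of the proof of Theorem~\ref{th1}. First I reduce \eqref{good2} to the transversality condition~\eqref{trans2} using the equation. Then I prove density with a finite-parameter family of exact solutions plus Sard's theorem, and openness with a positive minimum on $K$.

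\emph{Step 1: equivalence with \eqref{trans2}.} Since $\alpha$ depends only on $\rho$, differentiating \eqref{bell} in $u$ gives
\[
\rho t_{\rho\rho}+2t_\rho=-\alpha(\rho)t_{uu},\qquad t=B_u .
\]
At a point of $K$ where $t_u=t_\rho=0$, this yields $t_{\rho\rho}=-(\alpha/\rho)t_{uu}$. Hence
\[
t_{uu}t_{\rho\rho}-t_{u\rho}^2=-\frac{\alpha}{\rho}\,t_{uu}^2-t_{u\rho}^2 .
\]
Because $\alpha>0$ and $\rho>0$ on $K$, this quantity is a negative definite quadratic form in $(t_{uu},t_{u\rho})$. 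So it is nonzero exactly when $(t_{uu},t_{u\rho})\ne(0,0)$. Thus \eqref{good2} is equivalent to \eqref{trans2}, i.e.\ to transversality to $L$ in Lemma~\ref{lem2}; at such points the critical point of $t$ is in fact a nondegenerate extremum.

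\emph{Step 2: density.} Fix an analytic solution $B$. I construct four auxiliary analytic solutions $B_i$ by the Cauchy--Kovalevskaya theorem, posing Cauchy data on the line $\rho=\rho_*$. This line is noncharacteristic because the coefficient of $B_{\rho\rho}$ equals $\rho_*\ne0$. With $\Delta u=u-u_*$, the data are
\[
\begin{array}{c|cccc}
i&1&2&3&4\\ \hline
B_i(u,\rho_*)&(\Delta u)^2/2&0&(\Delta u)^3/6&0\\
(B_i)_\rho(u,\rho_*)&0&\Delta u&0&(\Delta u)^2/2
\end{array}.
\]
Reading off the $u$-derivatives of these data at $u_*$ shows that the matrix
\[
\frac{\partial\bigl((t_\lambda)_u,(t_\lambda)_\rho,(t_\lambda)_{uu},(t_\lambda)_{u\rho}\bigr)}{\partial(\lambda_1,\dots,\lambda_4)}
\]
is the identity at $(u_*,\rho_*)$, where $t_\lambda=(B_\lambda)_u$ and $B_\lambda=B+\sum\lambda_iB_i$. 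This matrix does not depend on $B$ or $\lambda$. I therefore choose the initial open neighborhood small enough that all $B_i$ are defined there and the matrix stays nonsingular; this choice is independent of $B$.

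The degeneracy set $\{(t_\lambda)_u=(t_\lambda)_\rho=(t_\lambda)_{uu}=(t_\lambda)_{u\rho}=0\}$ is then cut out by four independent equations in the six variables $(u,\rho,\lambda)$. By the regular value theorem it is a two-dimensional submanifold or empty. Its projection to the four-dimensional $\lambda$-space therefore has measure zero. This follows from Sard's theorem, or even just from the dimension count, since a smooth image of a two-dimensional manifold in $\mathbb R^4$ is null. So there are arbitrarily small $\lambda$ for which $B_\lambda$ satisfies \eqref{good2} on $K$. Each such $B_\lambda$ is analytic, and $B_\lambda\to B$ with all derivatives on $K$ as $\lambda\to0$.

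\emph{Step 3: openness.} Condition \eqref{good2} is equivalent to positivity on $K$ of $G=t_u^2+t_\rho^2+t_{uu}^2+t_{u\rho}^2$. Let $m=\min_KG>0$. If the derivatives of $\widetilde B$ up to order three are uniformly close enough to those of $B$ on $K$, the same estimate as in Theorem~\ref{th1} gives $\widetilde G>m/2$ on $K$.

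The main obstacle I expect is Step~2: guaranteeing that the Cauchy--Kovalevskaya solutions $B_i$ exist on a neighborhood fixed in advance, independently of $B$. I will handle this as in Theorem~\ref{th1}. The $B_i$ depend only on the equation and on polynomial data, so their common domain of existence can be taken as the initial neighborhood, with $K$ chosen inside it.
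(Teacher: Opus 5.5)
Your proposal is correct and follows the paper's proof essentially step for step. It uses the same Cauchy--Kovalevskaya data on $\rho=\rho_*$, the identity Jacobian of $(B_{uu},B_{u\rho},B_{uuu},B_{uu\rho})$ with respect to $\lambda$, the regular value and Sard argument in $(u,\rho,\lambda)$-space, the same sum of squares $G$ for openness, and the determinant $-\frac{\alpha}{\rho}t_{uu}^2-t_{u\rho}^2$ for \eqref{trans2}. The differences are cosmetic: you check transversality first, note that it is equivalent to \eqref{good2} rather than merely implied by it, and state explicitly that $\rho_*\ne0$ makes the line $\rho=\rho_*$ noncharacteristic.
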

Let $b_{ij}$ denote the Taylor coefficients of $B$ at the point under
consideration. In the variables $u,\rho$, they are
\[
b_{ij}=\frac{1}{i!\,j!}
\frac{\partial^{i+j}B}{\partial u^i\partial\rho^j}(u_*,\rho_*).
\]
In particular, $t_{uu}=6b_{30}$ and $t_{u\rho}=2b_{21}$ at that point.
Condition~\eqref{good2} excludes simultaneous vanishing of $b_{30}$ and $b_{21}$.
The relation between these coefficients and a section of the elliptic
umbilic catastrophe $D_4^-$ was established in \cite{ss24}.
\begin{proof}
To prove density, fix an arbitrary analytic solution $B$ of \eqref{bell}.
Set $\Delta u=u-u_*$ and use the Cauchy--Kovalevskaya theorem
\cite[Ch.~I, \S\,7]{courant} to construct four analytic solutions $B_i$
with the data
\[
\begin{array}{c|rrrr}
i&1&2&3&4\\ \hline
B_i(u,\rho_*)&(\Delta u)^2/2&0&(\Delta u)^3/6&0\\
(B_i)_\rho(u,\rho_*)&0&\Delta u&0&(\Delta u)^2/2
\end{array}.
\]
For $B_\lambda=B+\sum_{i=1}^4\lambda_iB_i$, the matrix
\[
\frac{\partial\bigl((B_\lambda)_{uu},(B_\lambda)_{u\rho},
(B_\lambda)_{uuu},(B_\lambda)_{uu\rho}\bigr)}
{\partial(\lambda_1,\lambda_2,\lambda_3,\lambda_4)}
\]
is the identity matrix at $(u_*,\rho_*)$. It is independent of the original solution $B$ and the family parameters. By continuity, it is nonsingular
in a sufficiently small neighborhood of this point.
In choosing the initial open neighborhood, we require all $B_i$ to be
defined there and this matrix to be nonsingular. Such a choice is possible
and is independent of the original solution $B$.
By the regular value theorem \cite{hirsch}, simultaneous vanishing of
these four derivatives in the space of variables $(u,\rho,\lambda)$
defines a two-dimensional submanifold or the empty set.
By Sard's theorem, the image of its projection onto the four-dimensional
parameter space has measure zero.
Hence the parameters can be chosen arbitrarily close to zero so that
\eqref{good2} holds at every point of $K$.
Moreover, $B_\lambda\to B$ together with all derivatives on $K$.

We now prove openness. Condition~\eqref{good2} is equivalent to positivity of
\[
G=B_{uu}^2+B_{u\rho}^2+B_{uuu}^2+B_{uu\rho}^2
\]
throughout $K$. By compactness of $K$, $m=\min_K G>0$.
For a nearby solution $\widetilde B$, obtain $\widetilde G$ by replacing
$B$ with $\widetilde B$.
If the derivatives up to order three are uniformly close, then
$\widetilde G$ is uniformly close to $G$, since it is a finite sum of squares.
Thus, for all sufficiently close solutions,
\[
\sup_K|\widetilde G-G|<m/2,\qquad \widetilde G>m/2>0.
\]
Consequently, \eqref{good2} holds for all sufficiently close solutions.

We verify the transversality condition~\eqref{trans2}.
Differentiating \eqref{bell} with respect to $u$ gives
\[
\rho t_{\rho\rho}+2t_\rho=-\alpha(\rho)t_{uu}.
\]
At a point where $t_u=t_\rho=0$, we have
\[
\det\begin{pmatrix}t_{uu}&t_{u\rho}\\t_{\rho u}&t_{\rho\rho}\end{pmatrix}
=-\frac{\alpha(\rho)}{\rho}t_{uu}^2-t_{u\rho}^2<0
\]
by \eqref{good2}. Thus \eqref{trans2} holds.
\end{proof}

\section{Dropping cusp singularities: $\rho_*=0$}

\subsection{}
Consider analytic mappings $f:(u,\rho)\mapsto(x,t)$ in an open neighborhood
of $(u_*,0)$. Let $K$ be a sufficiently small compact neighborhood of
this point in the relative topology of the half-plane $\rho\ge0$,
contained in the domain of the mappings. Set $I=K\cap\{\rho=0\}$.
On restrictions of mappings to the compact set $K$, we use the Whitney
topology, which here coincides with uniform convergence of all derivatives.

\begin{samepage}
\begin{lemma}\label{lem3}
Let $L$ be the closed stratified subset of the space of 1-jets defined by
\[
L=\{\rho=0,\ t_u=0\}.
\]
The mappings $f$ for which $j^1f$ is transverse to $L$ at the points of $K$
form an open dense subset. At each $(u,0)\in I$, the transversality
condition is
\begin{equation}\label{trans3}
t_u(u,0)=0\quad\Longrightarrow\quad t_{uu}(u,0)\ne0.
\end{equation}
\end{lemma}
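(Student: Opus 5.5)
We would follow the scheme of Lemmas~\ref{lem1} and~\ref{lem2}, with the jet space now containing the source coordinate $\rho$. The set $L=\{\rho=0,\ t_u=0\}$ in $J^1$ is a closed submanifold of codimension two: it is defined by two independent coordinate functions, the source coordinate $\rho$ and the fiber coordinate $t_u$. So there is a single stratum, and Whitney condition~(a) is trivial.

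\textbf{Step 1: the transversality condition.} We compute what transversality of $j^1f$ to $L$ means at a point $(u,0)\in I$. The map $(u,\rho)\mapsto(\rho,t_u)$ composed with $j^1f$ has Jacobian
\[
\begin{pmatrix}0&1\\ t_{uu}&t_{u\rho}\end{pmatrix},
\]
whose determinant is $-t_{uu}$. Hence, at a point with $t_u(u,0)=0$, transversality is equivalent to $t_{uu}(u,0)\ne0$, which is \eqref{trans3}. At points of $K$ with $\rho>0$, the jet $j^1f$ does not meet $L$ at all. The geometric meaning is that $t(\cdot,0)$, as a function on $I$, has only nondegenerate critical points.

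\textbf{Step 2: density.} Given an analytic $f$, we use the explicit family $t_{\varepsilon,\eta}=t+\varepsilon u+\eta u^2/2$, leaving $x$ unchanged; only the $\varepsilon$-term is actually needed. By Sard's theorem applied to the one-variable function $u\mapsto t_u(u,0)$ on an open interval containing $I$, arbitrarily small $\varepsilon$ can be chosen so that $-\varepsilon$ is a regular value. Then $t_u(u,0)+\varepsilon=0$ implies $t_{uu}(u,0)\ne0$ on $I$. The perturbed mapping is still analytic and converges to $f$ with all derivatives on $K$ as $\varepsilon\to0$. This avoids any appeal to Thom's theorem and to the analytic approximation of \cite{grauert}, though one could alternatively argue as in Lemma~\ref{lem2}.

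\textbf{Step 3: openness.} Condition~\eqref{trans3} is equivalent to positivity of $G(u)=t_u(u,0)^2+t_{uu}(u,0)^2$ on the compact set $I$. Let $m=\min_I G>0$. As in the proof of Theorem~\ref{th1}, uniform closeness of derivatives up to order two on $K$ gives $\sup_I|\widetilde G-G|<m/2$, so the condition persists for all sufficiently close mappings.

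The only delicate point, which we expect to be the main obstacle, is the boundary: $K$ is a neighborhood in the half-plane, so jets are taken at boundary points $\rho=0$ of a manifold with boundary. This is why we restrict attention to the source submanifold $I$ and treat transversality there directly, as above, rather than invoking Thom's theorem on an open set. Because both density and openness reduce to one-variable statements on $I$, the argument is self-contained.
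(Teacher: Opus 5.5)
Your proposal is correct and follows essentially the same route as the paper: transversality reduces to $\det\begin{pmatrix}0&1\\t_{uu}&t_{u\rho}\end{pmatrix}=-t_{uu}(u,0)\ne0$, density uses the perturbation $t+\varepsilon u$ with $-\varepsilon$ a regular value of $u\mapsto t_u(u,0)$ (by Sard's theorem), and openness uses the positive minimum of $t_u(u,0)^2+t_{uu}(u,0)^2$ on the compact set $I$. The extra term $\eta u^2/2$ is superfluous, as you note yourself, and setting $\eta=0$ gives exactly the paper's argument.
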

\end{samepage}
\begin{proof}
The equations $\rho=0$, $t_u=0$ define a single smooth stratum of
codimension two. Whitney condition~(a) is satisfied.
Transversality at $(u,0)\in I$ is equivalent to nonsingularity of the
matrix of derivatives of $(\rho,t_u)$ with respect to $(u,\rho)$,
that is, to \eqref{trans3}:
\[
\det\begin{pmatrix}0&1\\t_{uu}(u,0)&t_{u\rho}(u,0)\end{pmatrix}
=-t_{uu}(u,0)\ne0.
\]
To prove density, set $t_\varepsilon=t+\varepsilon u$, leaving $x$ unchanged.
The function $u\mapsto t_u(u,0)$ is defined on an open interval containing
the projection of $I$ onto the $u$-axis.
By Sard's theorem, its critical values form a set of measure zero.
Hence there are arbitrarily small $\varepsilon$ for which
$-\varepsilon$ is a regular value. At each point where
\[
(t_\varepsilon)_u(u,0)=t_u(u,0)+\varepsilon=0,
\]
we have $(t_\varepsilon)_{uu}(u,0)=t_{uu}(u,0)\ne0$.
This means precisely that the zero is simple.
The condition holds simultaneously at every point of $I$, and
$t_\varepsilon\to t$ in the chosen topology.

We now prove openness. Condition~\eqref{trans3} is equivalent to positivity
of the continuous function $t_u(u,0)^2+t_{uu}(u,0)^2$ on the compact set $I$.
Hence
\[
m=\min_{(u,0)\in I}\bigl(t_u(u,0)^2+t_{uu}(u,0)^2\bigr)>0.
\]
If the first and second derivatives of $\widetilde t$ are uniformly close
to those of $t$, then the corresponding sums of squares are uniformly close on $I$.
Thus, for a sufficiently close function $\widetilde t$,
\[
\sup_{(u,0)\in I}\left|
\bigl(\widetilde t_u(u,0)^2+\widetilde t_{uu}(u,0)^2\bigr)
-\bigl(t_u(u,0)^2+t_{uu}(u,0)^2\bigr)\right|<m/2.
\]
Consequently, $\widetilde t_u(u,0)^2+\widetilde t_{uu}(u,0)^2>m/2$
throughout $I$, and \eqref{trans3} holds.
\end{proof}

\subsection{}
Let $\alpha(\rho)$ be analytic in a neighborhood of zero, with $\alpha(0)>0$.
Consider both equations
\begin{equation}\label{baxis}
\rho B_{\rho\rho}+2B_\rho=\pm\alpha(\rho)B_{uu}.
\end{equation}
Fix a sufficiently small open neighborhood of $(u_*,0)$.
Its choice is independent of the original solution $B$.
Consider solutions analytic in this neighborhood.
The existence of such solutions was established in \cite{accepted}.
Using the formulas for the derivatives of the hodograph transformation
and \eqref{schwarz}, we obtain
\[
\begin{cases}
x_u=ut_u-\rho t_\rho,\\
x_\rho=ut_\rho\mp\alpha t_u,
\end{cases}
\]
\[
j=\pm\alpha t_u^2-\rho t_\rho^2.
\]
Thus, at the point $(u_*,0)$ under consideration, $j(u_*,0)=0$ is
equivalent to $t_u(u_*,0)=B_{uu}(u_*,0)=0$.

\begin{theorem}\label{th3}
The set of analytic solutions of \eqref{baxis} satisfying the following condition
at every point of $I$ is open and dense:
\begin{equation}\label{good3}
t_u(u,0)=0\quad\Longrightarrow\quad t_{uu}(u,0)\ne0.
\end{equation}
Wherever $t_u(u,0)=0$ on $I$, the transversality condition~\eqref{trans3} holds for $t=B_u$.
\end{theorem}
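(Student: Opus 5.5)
Since $t=B_u$, condition~\eqref{good3} reads $B_{uu}(u,0)=0\Rightarrow B_{uuu}(u,0)\ne0$. The plan is to follow the pattern of Lemma~\ref{lem3} and Theorem~\ref{th2}. The new feature is that perturbations must remain solutions of the degenerate equation~\eqref{baxis}, so the simple shift $t+\varepsilon u$ of Lemma~\ref{lem3} must be realized by an exact solution. The second assertion is immediate: \eqref{good3} is literally \eqref{trans3} for $t=B_u$. By the computation in the proof of Lemma~\ref{lem3}, it is equivalent to transversality of $j^1f$ to $L$ at points of $I$.

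For density, fix an analytic solution $B$. I would look for an exact analytic solution $B_1$ of \eqref{baxis} with $B_1=(\Delta u)^2/2+O(\rho)$, where $\Delta u=u-u_*$. The natural candidate is a polynomial in $u$,
\[
B_1=\frac{(\Delta u)^2}{2}+\beta(\rho).
\]
Substituting into \eqref{baxis} gives the ODE
\[
\rho\beta''+2\beta'=\pm\alpha(\rho).
\]
Setting $\gamma=\beta'$, the equation $(\rho^2\gamma)'=\pm\rho\alpha(\rho)$ has the analytic solution
\[
\gamma(\rho)=\pm\rho^{-2}\int_0^\rho s\alpha(s)\,ds,
\]
which is regular at $\rho=0$ with $\gamma(0)=\pm\alpha(0)/2$. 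Integrating once more gives an analytic $\beta$ on the whole neighborhood where $\alpha$ is analytic; this neighborhood does not depend on $B$. Then $B_\varepsilon=B+\varepsilon B_1$ is an analytic solution, and on the axis
\[
(B_\varepsilon)_{uu}(u,0)=B_{uu}(u,0)+\varepsilon,\qquad (B_\varepsilon)_{uuu}=B_{uuu}.
\]
Apply Sard's theorem to the one-variable function $u\mapsto B_{uu}(u,0)$ on an open interval containing the projection of $I$. This gives arbitrarily small $\varepsilon$ with $-\varepsilon$ a regular value, so \eqref{good3} holds at every point of $I$. Also $\varepsilon B_1\to0$ with all derivatives on $K$, so $B_\varepsilon\to B$ in the chosen topology.

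For openness, \eqref{good3} is equivalent to positivity of the continuous function $B_{uu}(u,0)^2+B_{uuu}(u,0)^2$ on the compact set $I$. Let $m>0$ be its minimum. If $\widetilde B$ has derivatives up to order three uniformly close to those of $B$ on $K$, then the same sum for $\widetilde B$ differs by less than $m/2$ on $I$, exactly as in the proof of Lemma~\ref{lem3}, and so stays positive.

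The main obstacle is the density step: the perturbation must be an \emph{exact} solution that is analytic up to the singular line $\rho=0$. The Cauchy--Kovalevskaya construction used in Theorem~\ref{th2} is not available there, because $\rho=0$ is characteristic-degenerate. The explicit separable solution above avoids this. The key check is the regularity of $\gamma$ at $\rho=0$, which holds because the integrand $s\alpha(s)$ vanishes to first order. If that failed, I would instead invoke the existence results of \cite{accepted} for solutions with prescribed analytic trace $B(u,0)$.
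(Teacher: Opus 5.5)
Your proposal is correct and follows essentially the same route as the paper's proof. The paper uses the same exact separable solution $P=u^2/2+A(\rho)$ with $A'(\rho)=\pm\rho^{-2}\int_0^\rho s\alpha(s)\,ds$; your $(\Delta u)^2/2$ differs from $u^2/2$ only by an affine function of $u$, which is harmless. Like you, it applies Sard's theorem to $u\mapsto B_{uu}(u,0)$ to make $-\varepsilon$ a regular value, and it proves openness from the positive minimum of $B_{uu}(u,0)^2+B_{uuu}(u,0)^2$ on the compact set $I$.
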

For $\rho_*=0$, the equality $t_u(u_*,0)=0$ means $b_{20}=0$, while
$t_{uu}(u_*,0)=6b_{30}$. Thus \eqref{good3} is equivalent to $b_{30}\ne0$
at such a point. The relation between this condition and a dropping
cusp $A_3$ was established in \cite{proval,mss24}.
\begin{proof}
To prove density, fix an arbitrary analytic solution $B$ and set
\[
P(u,\rho)=\frac{u^2}{2}+A(\rho),\qquad
A'(\rho)=\pm\frac1{\rho^2}\int_0^\rho s\alpha(s)\,ds.
\]
We check that $P$ is analytic and satisfies \eqref{baxis}. From the convergent series
$\alpha(\rho)=\sum_{n\ge0}\alpha_n\rho^n$, we obtain
\[
\int_0^\rho s\alpha(s)\,ds
=\sum_{n\ge0}\frac{\alpha_n}{n+2}\rho^{n+2},\qquad
A'(\rho)=\pm\sum_{n\ge0}\frac{\alpha_n}{n+2}\rho^n.
\]
Thus the expression for $A'(\rho)$ extends analytically to $\rho=0$,
with $A'(0)=\pm\alpha(0)/2$. Consequently, $A(\rho)$ is analytic.
Differentiating $\rho^2A'=\pm\int_0^\rho s\alpha(s)\,ds$ gives
$\rho A''+2A'=\pm\alpha$; at $\rho=0$, the equality holds by continuity.
Since $P_{uu}=1$, the function $P$ satisfies \eqref{baxis}.

Thus $B_\varepsilon=B+\varepsilon P$ is an analytic solution of the same
equation, with
\[
(B_\varepsilon)_{uu}(u,0)=B_{uu}(u,0)+\varepsilon,\qquad
(B_\varepsilon)_{uuu}(u,0)=B_{uuu}(u,0).
\]
By Sard's theorem, $\varepsilon$ can be chosen arbitrarily small so that
$-\varepsilon$ is a regular value of $u\mapsto B_{uu}(u,0)$.
Then, at every $(u,0)\in I$ where $B_{uu}(u,0)=-\varepsilon$, we have
$B_{uuu}(u,0)\ne0$. Hence all zeros of $(B_\varepsilon)_{uu}(u,0)$ are
simple, and \eqref{good3} holds throughout $I$.
Convergence $B_\varepsilon\to B$ together with all derivatives on $K$
proves density.

We now prove openness. Condition~\eqref{good3} is equivalent to positivity
of the continuous function $B_{uu}(u,0)^2+B_{uuu}(u,0)^2$ on the compact set $I$.
Hence
\[
m=\min_{(u,0)\in I}\bigl(B_{uu}(u,0)^2+B_{uuu}(u,0)^2\bigr)>0.
\]
If the derivatives of $\widetilde B$ up to order three are uniformly close
to those of $B$, then the corresponding sums of squares are uniformly close on $I$.
Thus, for all sufficiently close solutions $\widetilde B$,
\[
\sup_{(u,0)\in I}\left|
\bigl(\widetilde B_{uu}(u,0)^2+\widetilde B_{uuu}(u,0)^2\bigr)
-\bigl(B_{uu}(u,0)^2+B_{uuu}(u,0)^2\bigr)\right|<m/2.
\]
Therefore $\widetilde B_{uu}(u,0)^2+\widetilde B_{uuu}(u,0)^2>m/2$
throughout $I$, and \eqref{good3} holds.
\end{proof}


\bigskip
\begin{thebibliography}{99}
\bibitem{rakh92} A.Kh. Rakhimov.
{\it Singularities of solutions of quasilinear equations} //
St. Petersburg Math. J. {\bf 4}:4, 813--818 (1993).
\bibitem{rakh93} A.Kh. Rakhimov.
{\it Singularities of Riemann invariants} //
Funct. Anal. Appl. {\bf 27}:1, 39--50 (1993).
\bibitem{dub06} B. Dubrovin.
{\it On Hamiltonian Perturbations of Hyperbolic Systems of Conservation Laws,
II: Universality of Critical Behaviour} //
Comm. Math. Phys. {\bf 267}:1, 117--139 (2006).
\bibitem{dgk09} B. Dubrovin, T. Grava, C. Klein.
{\it On Universality of Critical Behavior in the Focusing Nonlinear
Schr\"odinger Equation, Elliptic Umbilic Catastrophe and the Tritronqu\'ee
Solution to the Painlev\'e-I Equation} //
J. Nonlinear Sci. {\bf 19}:1, 57--94 (2009).
\bibitem{ko17} B.G. Konopelchenko, G. Ortenzi.
{\it Jordan form, parabolicity and other features of change of type transition
for hydrodynamic type systems} // J. Phys. A: Math. Theor.
{\bf 50}:21, 215205 (2017).
\bibitem{ko20} B.G. Konopelchenko, G. Ortenzi.
{\it On the plane into plane mappings of hydrodynamic type. Parabolic case} //
Rev. Math. Phys. {\bf 32}:3, 2050006 (2020).
\bibitem{ko13} B.G. Konopelchenko, G. Ortenzi.
{\it Quasi-Classical Approximation in Vortex Filament Dynamics.
Integrable Systems, Gradient Catastrophe, and Flutter} //
Stud. Appl. Math. {\bf 130}:2, 167--199 (2013).
\bibitem{bogaevsky} I.A. Bogaevsky, D.V. Tunitsky.
{\it Singularities of Multivalued Solutions of Quasilinear Hyperbolic Systems} //
Proc. Steklov Inst. Math. {\bf 308}, 67--78 (2020).
\bibitem{kud95} V.R. Kudashev, B.I. Suleimanov.
{\it Characteristic features of some typical spontaneous intensity collapse
processes in unstable media} // JETP Lett. {\bf 62}:4, 382--388 (1995).
\bibitem{kud01} V.R. Kudashev, B.I. Suleimanov.
{\it The effect of small dissipation on the onset of one-dimensional shock waves} //
J. Appl. Math. Mech. {\bf 65}:3, 441--451 (2001).
\bibitem{gar10} R.N. Garifullin, B.I. Suleimanov.
{\it From weak discontinuities to nondissipative shock waves} //
J. Exp. Theor. Phys. {\bf 110}:1, 133--146 (2010).
\bibitem{suldis} B.I. Suleimanov.
{\it Some Generic Singularities of Solutions of Equations with a Small Parameter}.
Doctor of Sciences dissertation, Ufa (2009) (in Russian).
\bibitem{sul12} B.I. Suleimanov.
{\it Generic singularities in solutions of the shallow water equations} //
Dokl. Math. {\bf 85}:1, 125--128 (2012).
\bibitem{chap24} A.M. Shavlukov.
{\it On Generic Singularities of Solutions to the 1D Gas Flow Equations:
Chaplygin and Bechert--Stanyukovich Cases} //
Lobachevskii J. Math. {\bf 45}:6, 2779--2791 (2024).
\bibitem{proval} B.I. Suleimanov, A.M. Shavlukov.
{\it A Typical Dropping Cusp Singularity of Solutions to Equations
of a One-Dimensional Isentropic Gas Flow} //
Bull. Russ. Acad. Sci. Phys. {\bf 84}:5, 552--554 (2020).
\bibitem{ss22} B.I. Suleimanov, A.M. Shavlukov.
{\it Inheritance of Generic Singularities of Solutions of a Linear Wave
Equation by Solutions of Isoentropic Gas Motion Equations} //
Math. Notes. {\bf 112}:4, 608--620 (2022).
\bibitem{ss24} B.I. Suleimanov, A.M. Shavlukov.
{\it Umbilic Singularities of Semiclassical Approximations to Solutions
of the Focusing Nonlinear Schr\"odinger Equation} //
Math. Notes. {\bf 116}:6, 1361--1372 (2024).
\bibitem{mss24} S.N. Melikhov, B.I. Suleimanov, A.M. Shavlukov.
{\it Typical Dropping Asymptotics in the Semiclassical Approximations
to Solutions of the Nonlinear Schr\"odinger Equation} //
Differ. Equ. {\bf 60}:5, 590--602 (2024).
\bibitem{shav} A.M. Shavlukov.
{\it Singularities of Solutions of the One-Dimensional Equations of
Gas Dynamics and Nonlinear Geometrical Optics}.
Candidate of Sciences dissertation, Ufa (2025) (in Russian).
\bibitem{accepted} S.N. Melikhov, B.I. Suleimanov, A.M. Shavlukov.
{\it A Generalization of Korobeinik's Theorem and Generic Dropping Cusp
Singularities of Semiclassical Approximations to Solutions of the
Nonlinear Schr\"odinger Equation} //
Differentsial'nye Uravneniya. {\bf 62}:10, 1383--1398 (2026) (in Russian).
\bibitem{sedykh} V.D. Sedykh.
{\it Mathematical Methods of Catastrophe Theory}.
MCCME, Moscow (2021) (in Russian).
\bibitem{hirsch} M.W. Hirsch. {\it Differential Topology}.
Springer, New York (1976).
\bibitem{trotman} D.J.A. Trotman.
{\it Stability of transversality to a stratification implies Whitney
(a)-regularity} // Invent. Math. {\bf 50}, 273--277 (1979).
\bibitem{ry} B.L. Rozhdestvenskii, N.N. Yanenko.
{\it Systems of Quasilinear Equations and Their Applications to Gas Dynamics}.
Translations of Mathematical Monographs, vol.~55.
American Mathematical Society, Providence, RI (1983).
\bibitem{shvartsburg} A.B. Shvartsburg.
{\it Evolution of a modulated wave pulse in a medium with a saturated
non-linearity} // Sov. Phys. JETP. {\bf 43}:5, 855--860 (1976).
\bibitem{courant} R. Courant, D. Hilbert.
{\it Methods of Mathematical Physics. Vol.~II: Partial Differential Equations}.
Interscience Publishers, New York (1962).
\bibitem{grauert} H. Grauert.
{\it On Levi's problem and the imbedding of real-analytic manifolds} //
Ann. of Math. {\bf 68}:2, 460--472 (1958).
\end{thebibliography}
\end{document}